\theoremstyle{plain}
\newtheorem{theorem}{Theorem}
\newtheorem{lemma}{Lemma}
\newtheorem{proposition}{Proposition}
\newtheorem{corollary}{Corollary}
\theoremstyle{definition}
\theoremstyle{remark}
\newtheorem{remark}{Remark}
\newcommand{\ZZ}{\mathbb{Z}}
\DeclareMathOperator{\Ker}{\mathrm Ker}
\DeclareMathOperator{\Img}{\mathrm Im}
\DeclareMathOperator{\End}{End}
\DeclareMathOperator{\E}{\mathcal E}
\DeclareMathOperator{\charac}{\mathrm char}
\DeclareMathOperator{\tr}{\mathrm tr}
\def\Q{{\mathbb Q}}
\DeclareMathOperator{\U}{\mathbb U}
\DeclareMathOperator{\N}{\mathbb N}
\DeclareMathOperator{\F}{\mathcal F}
\DeclareMathOperator{\SL}{SL}
\DeclareMathOperator{\Frac}{Frac}
\DeclareMathOperator{\Rad}{Rad}
\DeclareMathOperator{\Alg}{\bf Alg}
\DeclareMathOperator{\Ga}{{\bf G}_a}
\DeclareMathOperator{\GL}{GL}
\newcommand{\Gm}{\mathbf{G}_{m}}
\newcommand{\A}{\mathcal{A}}
\newcommand{\pa}{\mathfrak{p}}
\DeclareMathOperator{\set}{\bf Set}
\DeclareMathOperator{\grp}{\bf Grp}
\DeclareMathOperator{\LDAG}{\bf LDAG}
\DeclareMathOperator{\LAG}{\bf LAG}
\DeclareMathOperator{\CE}{CentExt}
\begin{document}

\author{Andrei Minchenko}
\thanks{The author was supported by the ISF grant 756/12}
\address{Weizmann Institute of Science}
\email{an.minchenko@gmail.com}
\title{On central extensions of simple differential algebraic groups}

\maketitle

\begin{abstract}
We consider central extensions $Z\hookrightarrow E\twoheadrightarrow G$ in the category of linear differential algebraic groups.  We show that if $G$ is simple non-commutative and $Z$ is unipotent with the differential type smaller than that of~$G$, then such an extension splits. We also give a construction of central extensions illustrating that the condition on differential types is important for splitting. Our results imply that non-commutative almost simple linear differential algebraic groups, introduced by Cassidy and Singer, are simple.   
\end{abstract}

\section{Introduction}\label{sec:Intro}
This paper was motivated by a recent work of Cassidy and Singer~\cite{JH} concerning the structure of linear differential algebraic groups (LDAGs). In general, it is not true that an LDAG has a finite subnormal series with simple successive quotients. However, if one slightly relaxes the requirement for the quotients, one obtains a Jordan-H{\"o}lder type theorem for LDAGs. Namely, in~\cite{JH}, almost simple LDAGs were introduced (see Section~\ref{sec:ASLDAGs} of this paper), which can be thought of as being minimal central extensions of simple LDAGs by groups of smaller differential type. Cassidy and Singer also state a uniqueness property for the quotients. From this perspective, a better understanding of almost simple LDAGs is crucial for the development of the structure theory of LDAGs, which in turn, is important for creating algorithms that compute Galois groups of parametrized linear differential equations (see, e.~g., \cite{MiOvSi1} and \cite{MiOvSi2}).The papers~\cite{diffreductive}, \cite{MinOvRepSL2}, \cite{GO} also contain some recent results on the structure of LDAGs and their representations. 

Our main result is establishing that all non-commutative almost simple LDAGs are simple (Theorem~\ref{thm:main1}). The classification of simple LDAGs over differentially closed fields (with respect to several commuting derivations) is known due to Cassidy~\cite{CassidyClassification}. In our proof, aside from the facts from differential algebra and the theory of LDAGs, we use the results of Steinberg~\cite{Steinberg} and Matsumoto~\cite{Matsumoto} on abstract central extensions of Chevalley groups and the results of Borel and Tits~\cite{BT} on abstract homomorphisms of Chevalley groups. 

It is natural to ask whether our result generalizes if one considers central extensions of simple LDAGs by an arbitrary LDAG, not necessarily of smaller differential type. For example, is it true that all perfect central extensions of $\SL_2$ in the category of LDAGs are isomorphic to $\SL_2$ (as it is for the category of linear algebraic groups)? As we show in Section~\ref{sec:Construction}, the answer is NO (Corollary~\ref{cor:Chev}). A deeper investigation led us to a description of universal central extensions of non-commutative simple LDAGs. This will be the subject of a subsequent publication.

The paper is organized as follows. In Section~\ref{sec:Prelim}, we recall basic definitions and facts that will be used for the proof of the main Theorem~\ref{thm:main1}. Section~\ref{sec:Main} is devoted to the proof of this theorem. In the final Section~\ref{sec:Construction}, we give a construction for central extensions providing the negative answer to whether the aforementioned generalization of our result holds.

\section{Preliminaries}\label{sec:Prelim}
In this section, we briefly recall basic definitions and facts from differential algebra and differential algebraic geometry. For details, see~\cite{Ritt,KolDAG,Kol,KolConstr,CassidyRep,Cassidy,CassidyBuium,Kaplansky}. For basic definitions and facts from the theory of linear algebraic groups, we refer to \cite{Humphreys} and~\cite{Waterhouse}.

\subsection{Basic definitions and conventions}
We will denote the set of natural numbers $0,1,2,\ldots$ by~$\N$. The symbol $\Delta$ will always stand for a finite (possibly, empty) set of letters $\partial_1,\partial_2,\ldots$ and $m\in\N$ will denote its cardinality. All rings are supposed to be commutative with identity. A derivation of a ring $R$ is an additive map $\partial: R\to R$ satisfying Leibniz rule: 
$$
\partial(ab)=\partial(a)b+a\partial(b)\quad\forall a,b\in R.
$$
A $\Delta$-ring is a ring with the set $\Delta$ of commuting derivations. A mophism of $\Delta$-rings $k\to R$ is a homomorphism of rings that commutes with the action of~$\Delta$. Given such a morphism, we call $R$ a $\Delta$-$k$-algebra. If both $k$ and $R$ are fields, we say that $k\to R$ is a $\Delta$-field extension or $k$ is a $\Delta$-subfield of $R$. A morphism of $\Delta$-$k$-algebras is a homomorphism of $k$-algebras which is a morphism of $\Delta$-rings. 

If $A$ and $B$ are $\Delta$-$k$-algebras, their tensor product $A\otimes_kB$ (as of $k$-algebras) is endowed with a unique structure of a $\Delta$-$k$-algebra such that the inclusion maps $A,B\to A\otimes B$ are morphism of $\Delta$-rings.

The word ``differential" will sometimes substitute the symbol $\Delta$ (and vice versa): for example, a differential ring is the same as a $\Delta$-ring. 

The basic example of a $\Delta$-ring is a ring of differential polynomials. Namely, let $Y$ be a set and $k$ a $\Delta$-ring. Consider the polynomial ring $k[y_{d_1\ldots d_m}]_{y\in Y, d_i\in\N}$. It has the structure of a $\Delta$-$k$-algebra determined uniquely by setting 
$$
\partial_i\left(y_{d_1\ldots d_i\ldots d_m}\right):=y_{d_1\ldots (d_i+1)\ldots d_m}.
$$
This $\Delta$-$k$-algebra is denoted $k\{Y\}$. We identify $y\in Y$ with $y_{0\ldots 0}$. The elements of $k\{Y\}$ may be interpreted as functions $k^Y\to k$ so that the evaluation map $\nu_x:k\{Y\}\to k$ corresponding to a point $x\in k^Y$ is a morphism of $\Delta$-$k$-algebras. The $k$-algebra $k\{Y\}$ is filtered by its subalgebras $k\{Y\}_{\leq s}$, $s\in\N$, generated by all $y_{d_1\ldots d_m}$ with $\sum d_i\leq s$. The differential polynomials from $k\{Y\}_{\leq s}$ are said to be of order not exceeding~$s$. 

The notation $k\{X\}$ will also be used later to denote the ring of differential polynomial functions on a $\Delta$-$k$-algebraic set $X$ (Section~\ref{sec:Dsets}). In order to avoid confusion, we make a convention to use letters $Y$, $y$, $y_1$, $y_2,\ldots$ only for the defined above ring of differential polynomials, e.~g., $k\{y_1,\ldots,y_n\}$.

A $\Delta$-$k$-algebra is called $\Delta$-finitely generated if it is a quotient of a $k\{Y\}$ for some finite~$Y$. An ideal $I$ of a $\Delta$-ring $R$ is called differential of it is stable under the action of~$\Delta$. If a subset $F\subset I$ is not contained in a smaller $\Delta$-ideal, $I$ is said to be $\Delta$-generated by $F$. If there exists such finite~$F$, $I$ is called $\Delta$-finitely generated. 

If $R$ is a domain, we write $\Frac R$ for its field of fractions. We will use bold font to denote categories. Most categories in the text will (hopefully) be clear by their names, e.~g. $\set$, $\grp$, $\Delta$-$k$-$\Alg$, for which we will omit the description.

All differential fields will be supposed to have characteristic~$0$.

\subsection{Differential fields}\label{sec:DiffField}
A $\Delta$-field extension $k\to K$ is called \emph{semi-universal} (over $k)$ if, for every $\Delta$-finitely generated $\Delta$-field extension $k\to k'$, there is a $\Delta$-$k$-homomorphism $k'\to K$. If $K$ is semi-universal over all $\Delta$-finitely generated field extensions $k'\subset K$ of~$k$, it is called \emph{universal over~$k$}. Every $\Delta$-field has a universal (over it) $\Delta$-field extension. A universal over $\Q$ $\Delta$-field is simply called \emph{universal}.  

A $\Delta$-field $K$ is called \emph{differentially closed}, or $\Delta$-closed, if every system $\{P_1=\ldots=P_n=0\}$, $P_i\in K\{y_1,\ldots,y_n\}$, that has a solution in some $\Delta$-field extension of $K$, has one in $K$ too. Equivalently, for every prime $\Delta$-ideal $\pa\subset K\{y_1,\ldots,y_n\}$, there exists an $x\in K^n$ such that $f(x)=0$ for all $f\in\pa$. Note that differentially closed fields are algebraically closed. Universal fields are differentially closed.

We reserve letter $\U$ to denote universal $\Delta$-fields. When we write $k\subset\U$, it means that $k$ is a $\Delta$-subfield of $\U$.

\subsection{$\Delta$-algebraic sets}\label{sec:Dsets}
For a $\Delta$-field $k$ and an $n\in\N$, one defines the \emph{Kolchin topology} on $k^n$ by setting its basic closed sets to be the sets of solutions of systems of differential polynomial equations over $k$ in differential indeterminates $y_1,\ldots,y_n$. In particular, Zariski closed sets are Kolchin closed. Kolchin topology is Noetherian. Moreover, if $k$ is differentially closed, there is a 1-1 correspondence between closed subsets of $k^n$ and radical $\Delta$-ideals in $k\{y_1,\ldots,y_n\}$, which to every closed $X$ assigns the $\Delta$-ideal $I(X)$ of differential polynomials vanishing on $X$. 

An (affine) \emph{$\Delta$-algebraic set} is a Kolchin closed subset  $X\subset\U^n$, where $\U$ is a universal field
and $n\in\N$. If $k\subset\U$ and the ideal $I(X)\subset \U\{y_1,\ldots,y_n\}$ is generated by $I(X)\cap k\{y_1,\ldots,y_n\}$, $X$ is said to be defined over~$k$, or a \emph{$\Delta$-$k$-algebraic set}.

Let $X\subset\U^n$ be a $\Delta$-$k$-algebraic set. Restrictions of functions from $k\{y_1,\ldots,y_n\}$ to $X$ form a $\Delta$-$k$-algebra $k\{X\}$ of \emph{differential polynomial} functions on $X$. If $Y\subset\U^r$ is a $\Delta$-$k$-algebraic set, we call a map $f:X\to Y$ a \emph{morphism} if its coordinates are locally given by fractions of differential polynomial functions on~$X$. If the coordinates are differential polynomial functions, we call $f$ a differential polynomial morphism. Note that, unlike the situation in algebraic geometry, morphisms $X\to\U$ are, in general, not differential polynomial. Fortunately, the morphisms of linear $\Delta$-algebraic groups, which we define below, behave as expected.


The image of a morphism of $\Delta$-$k$-sets contains a Kolchin open subset of its closure.

\subsection{Linear differential algebraic groups (LDAGs)}
A \emph{linear differential algebraic group} over $k\subset\U$ is a group object in the category of $\Delta$-$k$-algebraic sets whose product map $G\times G\to G$ is differential polynomial. A map of LDAGs over $k$ is called a \emph{morphism} if it is a morphism of their underlying $\Delta$-$k$-algebraic sets that respects the product maps. The category of LDAGs over $k$ will be denoted $\LDAG_k(\U)$. Note that the image of a morphism of  LDAGs is closed.

The main example of an LDAG is $\GL_n(\U)$. Here, we identify the underlying space of $\GL_n(\U)$ with the $\Delta$-$k$-algebraic set $\{(A,z)\in\End_{\U}(\U^n)\times \U\ :\ z\det A=1\}$. Due to Cassidy~\cite{CassidyRep}, for every $G\in\LDAG_k(\U)$, there is a differential polynomial monomorphism $G\to\GL_n(\U)$. Moreover, every morphism in $\LDAG_k(\U)$ is differential polynomial.

There is a functorial point of view on LDAGs which does not refer to a choice of $\U$ nor the embedding into $\U^n$. Namely, let $k$ be a differential field. Let $\LDAG_k$ denote the category whose objects are the representable functors $\Delta$-$k$-$\Alg\to\grp$ represented by reduced $\Delta$-finitely generated $\Delta$-$k$-algebras (which have to be a Hopf algebra) and the morphisms are natural transformations. 

\begin{proposition}\label{prop:equiv}
Let $k$ be a $\Delta$-subfield of a universal $\Delta$-field $\U$. There is an equivalence of categories $\E:\LDAG_k\to\LDAG_k(\U)$.
\end{proposition}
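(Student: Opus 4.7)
The plan is to construct $\E$ as the ``take $\U$-points'' functor and verify it is an equivalence, in direct parallel with the classical correspondence between affine group schemes and affine algebraic groups.

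First, I would define $\E$ on objects. Given $F\in\LDAG_k$ represented by a reduced $\Delta$-finitely generated $\Delta$-$k$-Hopf algebra $A$, choose a presentation $A\cong k\{y_1,\ldots,y_n\}/I$ with $I$ a radical $\Delta$-ideal, and set $\E(F):=F(\U)$. The bijection $F(\U)=\Hom_{\Delta\text{-}k\text{-}\Alg}(A,\U)$ identifies $\E(F)$ with the zero locus $V(I)\subset\U^n$, which is a $\Delta$-$k$-algebraic set by definition, and the Hopf algebra comultiplication $A\to A\otimes_k A$ --- itself a $\Delta$-$k$-algebra morphism --- pulls back to a differential polynomial product $\E(F)\times\E(F)\to\E(F)$, so $\E(F)\in\LDAG_k(\U)$. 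A natural transformation $F\to F'$ corresponds by the Yoneda lemma to a $\Delta$-$k$-Hopf algebra homomorphism $A'\to A$, whose induced map on $\U$-points is a differential polynomial group homomorphism, so $\E$ is functorial.

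Next, I would check that $\E$ is fully faithful. By Cassidy's result recalled above, every morphism in $\LDAG_k(\U)$ is differential polynomial, hence pulls generators of $k\{G'\}$ back to elements of $k\{G\}$, yielding a $\Delta$-$k$-Hopf algebra morphism $A'\to A$ which produces the given morphism on $\U$-points. The key point for injectivity is that any reduced $\Delta$-finitely generated $\Delta$-$k$-algebra $A$ is separated by its $\U$-points: because $\U$ is universal over $k$, each prime $\Delta$-ideal of $A$ is the kernel of some $\Delta$-$k$-algebra morphism $A\to\U$, so the intersection of all such kernels equals the nilradical, which is zero. This forces two natural transformations $F\to F'$ to be equal once they agree on $\U$-points, giving injectivity on Hom-sets.

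Finally, for essential surjectivity, given $G\in\LDAG_k(\U)$ closed in $\U^n$, set $A=k\{G\}=k\{y_1,\ldots,y_n\}/(I(G)\cap k\{y_1,\ldots,y_n\})$. This is reduced and $\Delta$-finitely generated over $k$, and the differential polynomial maps encoding the group structure of $G$ dualize (via the description of morphisms into $\U$-valued sets on $G\times G$) to give $A$ the structure of a $\Delta$-$k$-Hopf algebra; the represented functor $F\in\LDAG_k$ then satisfies $\E(F)\cong G$ tautologically. The main obstacle throughout is precisely the Nullstellensatz-type separation that underlies both faithfulness and essential surjectivity, and it is exactly the universality assumption on $\U$ that supplies enough $\U$-points to make the argument go through.
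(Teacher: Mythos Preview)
Your proposal is correct and follows the same approach as the paper. The paper's proof is extremely terse---it defines $\E(G)=G(\U)$ and on morphisms via Yoneda, then asserts essential surjectivity, faithfulness ``since $\U$ is differentially closed,'' and fullness ``by the aforementioned result of Cassidy''---and your write-up simply unpacks each of these assertions with the expected Nullstellensatz-style separation argument for faithfulness and the explicit Hopf-algebra construction for essential surjectivity.
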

\begin{proof}
Set $\E(G):=G(\U)$ and $\E(G\stackrel{f}{\to} H):=\{g\mapsto g f^*\}$, where $f^*$ is a morphism in $\Delta$-$k$-$\Alg$ corresponding (by Yoneda) to $f$. Then $\E$ is essentially surjective, fathful (since $\U$ is differentially closed), and full (by the aforementioned result of Cassidy).
\end{proof}

Sometimes, we will use terminology defined for $\LDAG_k(\U)$ to name objects in $\LDAG_k$ (or vice versa), which will be justified by Proposition~\ref{prop:equiv}.

\subsection{Linear algebraic groups (LAGs) as LDAGs}
If one considers $\U$ as an algebraically closed field and drops the words ``differential" and ``$\Delta-$" in the definitions above, one gets the definitions of the categories $\LAG_k$ and $\LDAG_k(\U)$. Similarly, we have an equivalence $\E_a\LAG_k\to\LAG_k(\U)$. Since the Kolchin topology is finer than the Zariski one, $\LAG_k(\U)$ is just a subcategory of $\LDAG_k(\U)$. 
The forgetful functor $\Delta$-$k$-$\Alg\to k$-$\Alg$ has a left adjoint, which is a generalization of the above construction of $k\{Y\}$ from $k[Y]$  (see, e.~g., \cite{Gillet2} for details). This induces the functor $\F:\LAG_k\to\LDAG_k$  so that one has the commutative diagram:
\begin{equation}\label{eq:diagram}
\xymatrixcolsep{4pc}
\xymatrix{
\LDAG_k \ar[r]^-{\E} & \LDAG_k(\U)\\
\LAG_k \ar[u]^{\F}\ar[r]_{\E_a} & \LAG_k(\U) \ar[u]_{\text{inclusion}}
}
\end{equation}
In particular, one can identify $\LAG_k$ with a subcategory of $\LDAG_k$, which we will do. Note also that, if $k\to K$ is a $\Delta$-field extension, $\F$ commutes with the base change functors $\LAG_k\to\LAG_K$ and $\LDAG_k\to\LDAG_K$. Moreover, if $K$ is a $\Delta$-subfield of $\U$, the whole diagram~\eqref{eq:diagram} commutes with the base change $k\to K$.

\subsection{Simple LDAGs}
For the following, recall some terminology. 
For a ring $R$ and a set $S$ of its derivations, define
$$
R^S:=\{r\in R\ :\ s(r)=0\quad\forall\ s\in S\}.
$$
Then $R^S$ is a subring of $R$. If $R$ is an algebraically closed field of characteristic~$0$, so is $R^S$.



Let $k_0\subset k$ be a subfield of $\Delta$-constants and let $S\subset k\Delta$. Consider the functor $\Delta$-$k_0$-$\Alg\to\Delta$-$k$-$\Alg$ that sends $A$ to $A_k/[SA]$, where $A_k:=A\otimes k$ and $[SA]\subset A_k$ is the $\Delta$-ideal $\Delta$-generated by $SA\subset A_k$. It gives rise to the functor 
$$
\LDAG_{k_0}\to\LDAG_k,\qquad G\mapsto G_k^S,\qquad\text{where }\left(G_k^S\right)(R):=G(R^S).
$$
Sometimes, when $k$ is clear from the context, we will write $G^S$ instead of $G_k^S$.   

A connected group $G\in\LDAG_k$ is called \emph{simple} if the kernel of every non-trivial epimorphism $G\to H$ in $\LDAG_k$ is finite.

\begin{theorem}[{\cite[Theorem 17, p.~231]{CassidyClassification}}]\label{thm:Cassidy}
For every simple non-commutative $G\in\LDAG_{\U}$, there exists a simple split $H\in\LAG_{\Q}$ and a subset $S\subset \U\Delta$ such that $G=H_{\U}^S$.
\end{theorem}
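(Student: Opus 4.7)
My plan is to reduce the classification of simple non-commutative LDAGs to the classification of simple $\Delta$-$\U$-Lie algebras via the $\Lie$ functor, then descend along a carefully chosen subspace of $\U\Delta$, and finally reintegrate at the group level; this is Cassidy's strategy. First I would set $\mathfrak{g}:=\Lie(G)$. By functoriality, the derivations $\partial\in\Delta$ acting on the coordinate Hopf algebra of $G$ induce commuting derivations of $\mathfrak{g}$ compatible with the bracket, giving it the structure of a finite-dimensional $\Delta$-$\U$-Lie algebra. Any proper nonzero $\Delta$-stable Lie ideal would integrate to a proper nontrivial closed normal connected LDAG subgroup of $G$, so simplicity and non-commutativity of $G$ force $\mathfrak{g}$ to be nonabelian and $\Delta$-simple.

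Next I would apply classical structure theory. The solvable radical of $\mathfrak{g}$ and its center are characteristic, hence $\Delta$-stable, hence zero, so $\mathfrak{g}$ is semisimple as a $\U$-Lie algebra. Decomposing $\mathfrak{g}=\bigoplus_i\mathfrak{g}_i$ into simple $\U$-Lie ideals, the $\Delta$-action permutes the summands in a controlled manner, and $\Delta$-simplicity strongly restricts this action. The heart of the argument is to construct a subspace $S\subset\U\Delta$, maximal among those for which the canonical map $\mathfrak{g}^S\otimes_{\U^S}\U\to\mathfrak{g}$ is an isomorphism. Differential closedness of $\U$ ensures that $\U^S$ is an algebraically closed field of characteristic zero and that $\mathfrak{g}^S$ is a simple Lie algebra over $\U^S$. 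By the Cartan-Killing classification, $\mathfrak{g}^S$ is then the scalar extension from $\Q$ of the Lie algebra of a simple split $H\in\LAG_\Q$.

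Finally I would promote this identification to the group level. Both $G$ and $H_\U^S$ are connected LDAGs over $\U$ with canonically isomorphic $\Delta$-$\U$-Lie algebras $\Lie(H_\U^S)\cong\mathfrak{g}^S\otimes_{\U^S}\U\cong\mathfrak{g}$. One can construct a morphism of LDAGs realizing this Lie-algebra isomorphism by matching formal group laws along the identity and spreading out via algebraicity, then invoke simplicity of $G$ (and, if needed, replacement of $H$ by another member of its finite $\Q$-isogeny class) to conclude $G\cong H_\U^S$.

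The hardest step is the descent: one must show that the $\Delta$-module structure on $\mathfrak{g}$ is rigid enough to pin down a subspace $S\subset\U\Delta$ for which $\mathfrak{g}^S$ is a $\U^S$-form of $\mathfrak{g}$ and remains simple. This is a genuine theorem about $\Delta$-modules over differentially closed fields, turning on the behaviour of stabilizers of tensors under $\U\Delta$, and is the technical core of Cassidy's argument.
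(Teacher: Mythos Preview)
The paper does not prove this statement: it is quoted as \cite[Theorem~17, p.~231]{CassidyClassification} and used as a black box in Section~\ref{sec:proof}, so there is no proof in the present paper to compare your sketch against.

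For what it is worth, your outline does trace the broad arc of Cassidy's original argument---pass to the $\Delta$-Lie algebra $\mathfrak g=\Lie(G)$, show it is $\Delta$-simple and nonabelian, exhibit a constant field $\U^S$ over which $\mathfrak g$ descends to an ordinary simple Lie algebra, and then return to the group level. But several steps are more delicate than you indicate. The phrase ``finite-dimensional $\Delta$-$\U$-Lie algebra'' is not the right hypothesis: $\Lie(G)$ is a $\Delta$-$\U$-Lie algebra which is differentially finitely generated, not a priori finite-dimensional over $\U$, and this distinction matters for the descent. The claim that a proper nonzero $\Delta$-ideal of $\mathfrak g$ integrates to a proper nontrivial normal closed connected $\Delta$-subgroup of $G$ is itself a theorem of Cassidy, not a formality in this category. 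Finally, the step promoting the Lie-algebra isomorphism to an isomorphism $G\cong H_\U^S$ is not carried out by ``matching formal group laws along the identity and spreading out via algebraicity''; Cassidy instead passes through the Zariski closure of $G$ in $\GL_n$ and invokes the structure theory of Chevalley groups, and the possible need to adjust $H$ within its isogeny class is handled there rather than as an afterthought. Your identification of the descent step as the technical core is accurate.
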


\subsection{Differential type}\label{sec:DiffType}
Let $X\subset\U^n$ be an irreducible $\Delta$-$k$-algebraic set. For all $s\in\ZZ_{\geq 0}$, denote $k\{X\}_{\leq s}$ the image of $k\{y_1,\ldots,y_n\}_{\leq s}$ under the restriction map $k\{ y_1,\ldots,y_n\}\to k\{X\}$. The function $$\N\ni s\mapsto\mathrm{tr.deg}_{k}\Frac k\{X\}_{\leq s}$$ coincides, for all sufficiently large~$s$, with the values of a polynomial, which we denote $\omega_X(s)$ (see~\cite[Chapter 0.3]{KolDAG} and~\cite[Chapter II.12]{Kol} for details). If $X$ is infinite, we set $\tau(X):=\deg\omega_X$, the degree of $\omega_X$. If $X$ is a point, there is a convention that $\tau(X)=-1$. The number $\tau(X)$ is called the \emph{$\Delta$-type} of~$X$. For reducible $X$, the $\Delta$-type is defined to be the maximum among the $\Delta$-types of its irreducible components. One has $-1\leq \tau(X)\leq m$. Moreover, $\tau(X)$ is a birational invariant of~$X$. In particular, it is defined for all $G\in\LDAG_k$. If $X$ is infinite and Zariski closed, $\tau(X)=m$.

The $\Delta$-type has the following properties (\cite[Chapter IV.4]{KolDAG}, \cite[Corollaries 2.2 and 2.4]{JH}):
\begin{enumerate}
\item If $\mu:X\to X'$ is an injective (resp., dominant) morphism of $\Delta$-$k$-algebraic sets, then $\tau(X)\leq\tau(X')$ (resp., $\tau(X)\geq\tau(X')$).
\item For every exact sequence $G'\hookrightarrow G\twoheadrightarrow G''$ in $\LDAG_k$, $\tau(G)=\max\{\tau(G'), \tau(G'')\}$.
\end{enumerate}
By an exact sequence above, we mean that $G'$ is the kernel of the epimorphism $G\to G''$.

\subsection{Strongly connected and almost simple LDAGs}\label{sec:ASLDAGs}
For every $G\in\LDAG_k(\U)$, there exists a unique minimal normal closed subgroup $G_0\subset G$ (over $k$) such that $\tau(G/G_0)<\tau(G)$ (\cite[Section 2.2.1]{JH}).  It is called a \emph{strong identity component} of $G$. If $G=G_0$, $G$ is called \emph{strongly connected}. Strongly connected LDAGs are connected and, moreover, $(G_0)_0=G_0$. For every morphism of LDAGs $f: G\to H$, one has $f(G_0)\subset H_0$.

An infinite $G\in\LDAG_k$ is called \emph{almost simple} if, for all exact sequences $G'\hookrightarrow G\twoheadrightarrow G''$ in $\LDAG_k$, $\tau(G')<\tau(G'')$). Almost simple LDAGs are strongly connected. Every exact sequence $G'\hookrightarrow G\twoheadrightarrow G''$ with almost simple $G$ and $G''\neq\{e\}$ is central, that is, $G'\subset Z(G)$ (\cite[Corollary 2.14]{JH}).

For a $G\in\LDAG_k(\U)$ there exists a unique maximal normal closed subgroup $[G,G]\subset G$ (over $k$) such that $G/[G,G]$ is commutative. We call $G$ \emph{perfect} if $[G,G]=G$. Note that perfectness of $G$ does not imply perfectness of the abstract group $G(\U)$. If is $G\in\LDAG_k$ is non-commutative almost simple, then it is perfect~\cite[Proposition 3.4]{JH} and $Z(G)^\circ$ is unipotent~\cite[Proposition 3.5]{JH}. Moreover, $G/Z(G)^\circ$ is a simple LDAG (over $k$)~\cite[Corollary 2.15]{JH}.

For a $G\in\LDAG_k(\U)$, the radical $\Rad G$ of $G$ is a maximal connected solvable subgroup of $G$. As for algebraic groups, one shows that $\Rad G$ is unique. Moreover, it is defined over~$k$, which follows from~\cite[Corollary 2, p.~77]{KolDAG}. The LDAG $G$ is called semisimple if $\Rad G=\{e\}$ (equivalently, $G$ does not contain infinite commutative subgroups). It follows that semisimplicity is preserved under base field extensions. 

\subsection{The group $H^2(G,A)$}\label{sec:CE}
 Recall the group cohomology~\cite{Brown}. Let $G,A$ be groups, where $A$ is abelian, and let $\alpha: G\times A\to A$ be an action of $G$ on $A$. For an $n\in\N$, set $G^0=\{e\}$ and 
$$
C^n(G,A):=\{\text{all functions} \ G^n\to A\}.
$$ 
These abelian groups are cochains in the complex with the differentials $$d^n:C^n(G,A)\to C^{n+1}(G,A),\ n\in\N,$$ defined by
\begin{align*}
(d^nf):=&\alpha(g_0,f_1(g_1,\ldots,g_n))+\sum_{i=1}^n(-1)^nf_i(g_{0}g_{1},g_2,\ldots,g_n)+f(g_0,g_1g_2,g_3,\ldots,g_n)-\ldots\\
& \ldots+(-1)^{n}f(g_0,g_1,g_2,\ldots,g_{n-1}g_n)+(-1)^{n+1}f(g_0,g_1,g_2,\ldots,g_{n-1}).
\end{align*}
One defines the group of $n$-cocycles by $Z^n(G,A):=\Ker d^n$ and $n$-coboundaries by $B^n(G,A):=\mathrm{Im}\,d^{n-1}$, where $\mathrm{Im}\,d^{-1}:=\{0\}$. The $n^{\text{th}}$ cohomology group is $H^n(G,A):=Z^n(G,A)/B^n(G,A)$. It is a bifunctor, covariant in the second argument and contravariant in the first one. 
One has the cup product 
$$
\cup: H^i(G,A)\otimes_{\ZZ}H^j(G,A)\to H^{i+j}(G,A\otimes_{\ZZ} A),
$$
which, on the level of cochains, is defined by
$$
f\cup f'(g_1,\ldots, g_{i+j})=f(g_1,\ldots, g_i)\otimes \alpha(g_1\ldots g_i, f'(g_{i+1},\ldots, g_{i+j})). 
$$
Note that
\begin{align*}
Z^0(G,A)&=H^0(G,A)=A^G,\\
Z^{1}(G,A)&=\{f:G\to A\ \mid\ f(gh)=f(g)+\alpha(g,f(h))\}.
\end{align*}
Suppose now that the action $\alpha$ is trivial, that is, $\alpha(g,a)=a\ $ for all $g\in G$, $a\in A$. Then $H^1(G,A)=Z^1(G,A)$ is the group of homomorphisms $G\to A$. If $c\in Z^2(G,A)$, then $E_c:=G\times A$ is endowed with the group structure defined by
$$
(g_1,a_1)(g_2,a_2):=(g_1g_2,c(g_1,g_2)+a_1+a_2).
$$
The projection $E_c\to G$ is a central extension and has kernel~$A$. Recall that a central extension of $G$ by $A$ is a short exact sequence 
$$
A\hookrightarrow E\stackrel{\pi}{\twoheadrightarrow} G, 
$$
where $A$ embeds into $Z(E)$ (we identify $A$ with its image in~$E$). We will call it also $E$, assuming the rest of the data is given. A morphism $E_1\to E_2$ of two such extensions is given by a commutative diagram
$$
\xymatrix{
&E_1 \ar[d] \ar[rd]^{\pi_1}\\
A \ar[r]\ar[ru] & E_2 \ar[r]_{\pi_2} & G}.
$$
The set of central extensions of $G$ by $A$, where the isomorphic ones are identified, is denoted $\CE(G,A)$. It has an abelian group structure (determined by the Baer sum).
A central extension of $G$ by $A$ is called trivial, or splitting, if it is isomorphic to $A\to A\times G\to G$, where the first map is $a\mapsto(a,1)$ and the second is the projection $(a,g)\mapsto g$. Such extensions corresponds to the zero element of $\CE(G,A)$. In fact, the construction above gives a homomorphism of abelian groups
$$
\gamma_{G,A}:H^2(G,A)\to\CE(G,A).
$$
Moreover, $\gamma_{G,A}$ is an isomorphism. One can define the map (of sets) $\gamma_{G,A}$ in the context of an arbitrary symmetric monoidal category, where $G$ and $A$ are group objects. However, it does not have to be surjective nor injective.

\section{Main Result}\label{sec:Main}
The goal of this section is to prove the following
\begin{theorem}\label{thm:main}
Every central extension 
\begin{equation}\label{eq:ce}
A\hookrightarrow E\stackrel{\pi}{\twoheadrightarrow} G
\end{equation}
in $\LDAG_k$ splits if $G$ is simple, $A$ is unipotent and $\tau(A)<\tau(G)$.
\end{theorem}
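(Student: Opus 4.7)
The plan is to combine the classification of simple LDAGs (Theorem~\ref{thm:Cassidy}) with the Steinberg--Matsumoto theory of central extensions of Chevalley groups, exploiting the differential-type hypothesis $\tau(A)<\tau(G)$ to rule out obstructions that would otherwise survive abstractly. If $G$ is commutative simple, the claim is essentially elementary and handled separately; otherwise Theorem~\ref{thm:Cassidy} lets me write $G=H^S_\U$ for a simple split Chevalley group $H\in\LAG_\Q$ and some $S\subset\U\Delta$. Setting $K:=\U^S$, the underlying abstract group of $G(\U)$ is the Chevalley group $H(K)$ over the algebraically closed field $K$ of characteristic $0$. A reduction to the simply connected cover $\tilde H$ (and back, using that the kernel $\tilde H\to H$ is a finite central subgroup and that pulling back and pushing out along a finite central subgroup preserves the unipotent/$\tau$ hypotheses on $A$) allows me to assume $H$ is simply connected, so that Steinberg's presentation of $H(K)$ by root-subgroup generators $x_\alpha(t)$, $t\in K$, and the usual commutator relations, becomes available.

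Next, I would split the pullback of \eqref{eq:ce} over each root subgroup $U_\alpha\subset G$ individually. The subgroup $U_\alpha$, viewed as an LDAG, is commutative, isomorphic to $\Ga^S$, and satisfies $\tau(U_\alpha)=\tau(G)$ because $G=H^S$ is assembled from root subgroups. Since $A$ is unipotent (so commutative and uniquely divisible) with $\tau(A)<\tau(G)=\tau(U_\alpha)$, a direct analysis of $H^2(U_\alpha,A)$ in $\LDAG_k$---a cohomological computation whose outcome is forced by the $\Delta$-type property (1) of Section~\ref{sec:DiffType}---shows that $\pi^{-1}(U_\alpha)\to U_\alpha$ splits. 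Choosing compatible splittings for all $\alpha$ produces lifts $\tilde x_\alpha(t)\in E(\U)$ of the Steinberg generators. By Matsumoto's theorem, the obstruction to these lifts satisfying the full set of Steinberg relations, and hence to assembling a global splitting $G\to E$, is encoded by a Steinberg symbol $c\colon K^\times\times K^\times\to A(\U)$ that is bi-additive, skew-symmetric, and satisfies $c(a,1-a)=0$ for $a\neq 0,1$.

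The hardest step---and the one I expect to be the main obstacle---is to show that $c\equiv 0$. Because $E$ is an LDAG and the lifts $\tilde x_\alpha(t)$ depend differentially polynomially on $t$, the symbol $c$ must itself be a differential polynomial map of the two arguments. At this point the results of Borel and Tits on abstract homomorphisms of Chevalley groups enter: they constrain any such map between the relevant algebraic/differential-algebraic structures, and combined with $\tau(A)<\tau(G)=\tau(K^\times)$ together with property (1) of Section~\ref{sec:DiffType}, they rule out any nontrivial differential polynomial bi-additive map $K^\times\times K^\times\to A(\U)$ satisfying the Steinberg identity. With $c\equiv 0$, the lifts $\tilde x_\alpha(t)$ can be corrected so as to satisfy all Steinberg relations, producing a homomorphism $\sigma\colon G(\U)\to E(\U)$ that splits $\pi$ abstractly; a final rigidity argument (again in the spirit of Borel--Tits, or by direct inspection of the Steinberg section) promotes $\sigma$ to a morphism in $\LDAG_k$, completing the proof.
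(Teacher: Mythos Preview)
Your overall architecture---Cassidy's classification, reduction to the simply connected cover, Steinberg--Matsumoto, then promotion to $\LDAG_k$---is the paper's. But two of your steps do not go through as written.

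Step~4 is the main gap. You claim that $\pi^{-1}(U_\alpha)\to U_\alpha$ splits in $\LDAG_k$ because the relevant ``$H^2(U_\alpha,A)$ in $\LDAG_k$'' vanishes when $A$ is unipotent with $\tau(A)<\tau(U_\alpha)$. This vanishing is false: for any $\partial\in\Delta\setminus S$, the sequence
\[
0\longrightarrow \Ga^{S\cup\{\partial\}}\longrightarrow \Ga^S\xrightarrow{\ \partial\ }\Ga^S\longrightarrow 0
\]
is a non-split central extension in $\LDAG_\U$ with $\tau$ of the kernel strictly smaller than $\tau(\Ga^S)$ (a splitting would be a differential-polynomial antiderivative). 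So there is no ``direct cohomological analysis'' forcing the local splitting, and you lose the differential-polynomial dependence of the lifts $\tilde x_\alpha(t)$ on which your Step~7 argument for $c\equiv 0$ rests. The paper sidesteps this entirely: it never splits over root subgroups. Instead it works with the torus $T=(\Gm)^S$ inside a long-root $\SL_2$ and proves only that $\pi^{-1}(T)$ is \emph{commutative} (Propositions~\ref{prop:Comm} and~\ref{prop:StrConn}, which is where $\tau(A)<\tau(G)$ is actually used). Over the algebraically closed field $K=\U^S$, commutativity of $\pi^{-1}(T(K))$ is precisely the hypothesis in Matsumoto's theorem (Proposition~\ref{prop:Steinberg}) that forces the \emph{abstract} extension to split. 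Note this is not automatic: $H^2(H(K),A(\U))$ need not vanish for general $A$, as the construction in Section~\ref{sec:Construction} shows.

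Your Step~8 is also where the paper's real technical content lies, and ``a rigidity argument in the spirit of Borel--Tits'' undersells it. The paper's Central Lemma (Lemma~\ref{lem:FiniteCenter}) proves that for any abstract homomorphism $\varrho:\SL_2(k)\to\GL_n(K)$ the Zariski closure of the image has finite center; Borel--Tits enters only inside that proof. Applied to $\sigma$ restricted to each root $\SL_2$-subgroup $H_i(K)$, it forces the Kolchin closure $\widetilde{H_i}$ of $\sigma(H_i(K))$ to meet $A$ in a finite group, hence trivially since $A$ is unipotent---this is the only place unipotence of $A$ is used. From $\widetilde{H_i}=\sigma(H_i(K))$ one gets $\sigma(H(K))$ Kolchin closed, and then Cassidy's theorem on bijective morphisms finishes. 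You gesture at the right tool but do not isolate the statement that makes the promotion work.
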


As a corollary, by Section~\ref{sec:ASLDAGs} (in particular, that almost simple LDAGs are perfect) we obtain
\begin{theorem}\label{thm:main1}
Non-commutative almost simple LDAGs are simple.
\end{theorem}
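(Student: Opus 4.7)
The plan is to reduce Theorem~\ref{thm:main1} to Theorem~\ref{thm:main} applied to the canonical central extension by the connected component of the center. Let $G\in\LDAG_k$ be non-commutative almost simple. By the results collected in Section~\ref{sec:ASLDAGs}, I can use the following three facts for free: $G$ is perfect, $Z(G)^\circ$ is unipotent, and the quotient $H:=G/Z(G)^\circ$ is a simple LDAG over $k$. Since $G$ is non-commutative, $Z(G)^\circ$ is a proper closed normal subgroup of $G$, and the short exact sequence
$$
Z(G)^\circ \hookrightarrow G \twoheadrightarrow H
$$
is a central extension in $\LDAG_k$ with $H\ne\{e\}$.

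The first step is to dispose of the trivial case: if $Z(G)^\circ=\{e\}$, then $G=H$ is already simple and there is nothing to prove. Otherwise $Z(G)^\circ$ is a non-trivial proper closed normal subgroup of the almost simple LDAG $G$, so the defining property of almost simplicity yields
$$
\tau\bigl(Z(G)^\circ\bigr)<\tau(H).
$$
This together with the unipotence of $Z(G)^\circ$ and the simplicity of $H$ supplies exactly the hypotheses of Theorem~\ref{thm:main} for the extension above. Applying that theorem, I obtain a splitting, i.e.\ an isomorphism in $\LDAG_k$
$$
G\cong Z(G)^\circ\times H.
$$

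The final step extracts a contradiction from perfectness. Since $Z(G)^\circ$ is abelian (being unipotent), the commutator subgroup of any direct product $Z(G)^\circ\times H$ is contained in $\{e\}\times [H,H]\subseteq\{e\}\times H$. Hence
$$
G=[G,G]\subseteq\{e\}\times H,
$$
which forces $Z(G)^\circ=\{e\}$, contradicting the case assumption. Therefore the first alternative must have held all along, and $G=H$ is simple. The whole argument is essentially bookkeeping once Theorem~\ref{thm:main} is in hand; the only substantive point to verify carefully is that all hypotheses of Theorem~\ref{thm:main}---unipotence of $Z(G)^\circ$, simplicity of $H$, and the strict inequality $\tau(Z(G)^\circ)<\tau(H)$---follow directly from the cited almost-simple package in Section~\ref{sec:ASLDAGs}, so no new obstacle arises here.
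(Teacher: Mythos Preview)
Your argument is correct and is exactly the approach the paper takes: it states Theorem~\ref{thm:main1} as an immediate corollary of Theorem~\ref{thm:main} together with the facts from Section~\ref{sec:ASLDAGs} (perfectness of $G$, unipotence of $Z(G)^\circ$, simplicity of $G/Z(G)^\circ$), and your proposal simply spells out that one-line deduction in full.
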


We start with the Central Lemma, which concerns set-theoretic representations of a group of rational points of $\SL_2$.  In Section~\ref{sec:proof}, we prove Theorem~\ref{thm:main}. If $A$ and $B$ are subgroups of a group $C$ such that the product map $A\times B\to C$ is surjective, we write $C=AB=A\cdot B$.

\subsection{Central Lemma}
The following lemma will be an essential step in the proof of the main theorem. We state the lemma in a greater generality than we need as the proof works for that. Recall that, for a subgroup $G\subset\GL_n(k)$, we denote $\overline{G}$ its Zariski closure in $\GL_n(k)$.
\begin{lemma}\label{lem:FiniteCenter}
Let $k\subset K$ be fields of characteristic different from $2$, and 
$$
\varrho: \SL_2(k)\to\GL_n(K)
$$ 
a homomorphism of abstract groups. Then $Z\left(\overline{\Img\varrho}\right)$ is finite.
\end{lemma}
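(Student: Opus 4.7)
My approach leverages the perfectness of $\SL_2(k)$ together with the Weyl-element inversion of the diagonal torus, and then analyzes $Z(H)$, for $H:=\overline{\Img\varrho}$, via the Levi decomposition of $H^\circ$.

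If $k$ is finite, $\SL_2(k)$ is itself finite and the claim is trivial; so assume $k$ infinite. Then $\Char k\neq 2$ and $|k^*|\geq 3$ give some $a\in k^*$ with $a^2\neq 1$, and the identity
\[
[t(a),\,u^+(x)]=u^+((a^2-1)x)
\]
shows $U^+(k)\subset[\SL_2(k),\SL_2(k)]$; symmetrically for $U^-(k)$. Since $U^\pm(k)$ generate, $\SL_2(k)$ is perfect. Perfectness implies $\det\circ\varrho\equiv 1$, hence $H\subset\SL_n(K)$. Moreover, any algebraic morphism $\phi:H\to A$ with $A$ commutative, composed with $\varrho$, is a homomorphism from a perfect group to an abelian one, so trivial; by density of $\Img\varrho$ in $H$, $\phi$ itself is trivial. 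Hence $H$ has no nontrivial commutative algebraic quotient.

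Let $w:=\left(\begin{smallmatrix}0&-1\\1&0\end{smallmatrix}\right)\in\SL_2(k)$, so $w\,t(a)\,w^{-1}=t(a^{-1})=t(a)^{-1}$ for all $a\in k^*$. Setting $T_0:=\overline{\varrho(T(k))}$, the element $\varrho(w)$ normalizes $T_0$ and acts on it by inversion (by continuity of conjugation). For any $z\in Z(H)\cap T_0$, centrality of $z$ in $H$ gives $\varrho(w)z\varrho(w)^{-1}=z$, while inversion gives $\varrho(w)z\varrho(w)^{-1}=z^{-1}$; hence $z^2=e$. Since $\Char K\neq 2$, the $2$-torsion of the commutative algebraic group $T_0$ is finite, so $Z(H)\cap T_0$ is finite.

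To conclude, I reduce an arbitrary central element of $H$ to this case. Since $H/H^\circ$ is finite, it suffices to bound $Z(H^\circ)$. Writing $H^\circ=\Ru(H^\circ)\rtimes L$ with $L$ reductive, the absence of commutative quotients forces $L$ semisimple, whence $Z(L)$ is finite; the $L$-projection of any central element thus has finitely many values. Its residual $\Ru(H^\circ)$-part is central in $H^\circ$ and in particular pointwise fixed by $T_0$ acting by conjugation on $\Ru(H^\circ)$. The crux is that this $T_0$-action has no trivial weight, since it descends from the scaling action $t(a)u^\pm(x)t(a)^{-1}=u^\pm(a^{\pm 2}x)$ of $T(k)$ on the root subgroups of $\SL_2(k)$; thus the central unipotent part must vanish.

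The main obstacle is rigorously establishing the no-trivial-weight claim for the $T_0$-action on $\Ru(H^\circ)$. I expect this requires either a direct weight analysis—using that the images $\varrho(u^\pm(x))$ are Zariski-dense in the root-type subgroups of $H^\circ$ and are scaled by nonzero characters of $T_0$—or, in line with the paper's overall use of Borel--Tits on abstract homomorphisms of Chevalley groups, a passage to the quotient $H^\circ/\Ru(H^\circ)$ to reduce the problem to a dense abstract homomorphism $\SL_2(k)\to L(K)$ into a semisimple group, where the rigidity results of Borel--Tits identify $L$ up to isogeny as an image of $\SL_2$ and the triviality of the central unipotent kernel follows from the structure of its resulting central extensions.
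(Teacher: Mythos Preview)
Your approach via the Levi decomposition is genuinely different from the paper's, but it has a real gap exactly where you flag it. The assertion that $T_0$ acts on $\Ru(H^\circ)$ with no trivial weight does not follow from the scaling relation $t(a)u^\pm(x)t(a)^{-1}=u^\pm(a^{\pm2}x)$ in $\SL_2(k)$: the unipotent radical $\Ru(H^\circ)$ bears no a~priori relation to the images $\varrho(U^\pm(k))$, which typically project nontrivially to the Levi factor rather than sit inside $\Ru(H^\circ)$. The counterexample in Remark~\ref{rem:char} makes this concrete: in characteristic~$2$ one can have $\varrho(T(k))$ centralizing an infinite unipotent subgroup of~$H$, so the no-trivial-weight claim is genuinely characteristic-sensitive and cannot be extracted from a formal identity valid in every characteristic. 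Neither of your proposed repairs closes the gap: passing to $L=H^\circ/\Ru(H^\circ)$ discards precisely the information about $\Ru(H^\circ)$ that you need, and Borel--Tits rigidity for the induced map $\SL_2(k)\to L(K)$ says nothing about the kernel. There is also a secondary issue: the splitting $H^\circ=\Ru(H^\circ)\rtimes L$ need not exist in positive characteristic, while the lemma is asserted for all characteristics~$\neq 2$.

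The paper bypasses both problems by using a Bruhat-type decomposition supplied directly by Borel--Tits rather than a Levi decomposition. In its notation $G=\overline{\Img\varrho}$, and $\overline{U},\overline{U_-},\overline{H}$ are the Zariski closures of the images of the upper-unipotent, lower-unipotent, and diagonal subgroups; \cite[Proposition~7.2(i)]{BT} gives that $G$ is connected and that $O=\overline{U_-}\cdot\overline{H}\cdot\overline{U}$ is open in~$G$. A short manipulation with the Weyl element~$s$ shows $G=sO\cdot sO$, so every element of $G$ is conjugate into $O$, and in particular $Z(G)\subset O$. For central $z=uhv$ one conjugates by $x\in\varrho(T(k))$ to obtain $(u^{-1},x)=h(v,x)h^{-1}\in\overline{U}\cap\overline{U_-}$; this intersection centralizes all of $G$, and \cite[Proposition~7.1(iii)]{BT} supplies an $x=\varrho(y)$ (any diagonal $y$ with $y^2\neq 1$) whose centralizer in $\overline{U}$ is trivial, forcing first $\overline{U}\cap\overline{U_-}=\{1\}$ and then $u=v=1$. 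Hence $Z(G)\subset\overline{H}$, after which your own Weyl-inversion argument finishes. The point is that Borel--Tits is used to control centralizers inside $\overline{U}$, not to identify a Levi quotient---and that centralizer control is exactly the ingredient your argument is missing.
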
 

\begin{proof}
Set $G:=\overline{\Img\varrho}\subset\GL_n(L)$. Let $H,B,U,U_-\subset G$ denote the images via $\varrho$ of, respectively, the subgroups of diagonal, upper-triangular,  unipotent upper-triangular and unipotent lower-triangular matrices of $\SL_2(k)$. We have $B=HU=UH$. Hence, by~\cite[Theorem 4.3(b)]{Waterhouse} and~\cite[Corollary 7.4]{Humphreys},
$$
\overline{B}=\overline{H}\cdot\overline{U}=\overline{U}\cdot\overline{H.}
$$
By~\cite[Proposition 7.2(i)]{BT}, $\overline{G}$ is connected and its subset
$$
O=\overline{U_-}\cdot\overline{H}\cdot\overline{U}=\overline{U_-}\cdot\overline{B}\subset\overline{G}
$$ 
is open. Let $s\in G$ be an element acting on $H$ (via conjugation) by inversion. Then $s^2\in H$ and $\overline{U}_-=s\overline{U}s^{-1}$. Since $sO=\overline{U}s\overline{B}\subset\overline{G}$ is open, \cite[Lemma 7.4]{Humphreys} implies $sO\cdot sO=G$. Hence,
$$
\overline{U}s\overline{B}s\overline{B}=\overline{G}.
$$
In particular, every element of $\overline{G}$ is conjugate to an element of
$$
s\overline{B}s\overline{B}=s^{-1}\overline{B}s\overline{B}=\overline{U_-}\cdot\overline{H}\cdot\overline{U}.
$$
We conclude
$$
Z(\overline{G})\subset\overline{U_-}\cdot\overline{H}\cdot\overline{U}.
$$
Now, let
$$
z=uhv\in Z,\quad u\in \overline{U_-}, v\in\overline{U}, h\in \overline{H}.
$$
For every $x\in H$, we have
$$
uhv=z=xzx^{-1}=(xux^{-1})h(xvx^{-1}),
$$
whence
$$
\overline{U_-}\ni(u^{-1},x)=h(v,x)h^{-1}\in\overline{U}.
$$
 Note that every element of $\overline{U}\cap\overline{U_-}$ commutes, element-wise, with $U$ and $U_-$, hence with all~$G$. On the other hand, since $\charac k\neq 2$, there exists a diagonal matrix $y\in\SL_2(k)$ commuting with none of non-trivial unipotent matrices. Hence, by~\cite[Proposition 7.1(iii)]{BT}, there are no non-trivial elements of $\overline{U}$ commuting with $x:=\varrho(y)\in H$, which implies first $\overline{U}\cap\overline{U_-}=\{1\}$ and then $u=v=1$. Therefore, 
$$
Z(\overline{G})\subset \overline{H}.
$$
Since $s$ acts on $H$ by inversion, it acts on $\overline{H}$ by inversion too. This implies that all elements of $Z(\overline{G})$ coincide with their inverses. Let $L$ stand for the algebraic closure of~$K$. Since $\charac L\neq 2$, $\GL_n(L)$ does not contain unipotent matrices of order~$2$. Therefore, $Z(\overline{G})$ is conjugate in $\GL_n(L)$ to a subgroup of diagonal matrices (see, e.~g.,~\cite[Section 15]{Humphreys}). Now, the condition $z^2=1$ for all $z\in Z(\overline{G})$ implies finiteness of $Z(\overline{G})$.
\end{proof}

\begin{remark}\label{rem:char}
 The assumption $\charac k\neq 2$ in Lemma~\ref{lem:FiniteCenter} cannot be omitted, as the following example illustrates. Suppose, $k=\mathbb{F}_2(t)$, where $\mathbb{F}_2$ is a field with two elements, and $K$ is the algebraic closure of~$k$. Let $\partial$ denote the derivation of $k$ defined by $\partial(t)=1$. One can check that the map
$$
\begin{pmatrix}a&b\\ c&d\end{pmatrix}
\mapsto\begin{pmatrix}
1& ac&bd&\partial(a)d+\partial(b)c\\
0&a^2&b^2&\partial(ab)\\
0&c^2&d^2&\partial(cd)\\
0&0&0&1
\end{pmatrix}
$$
defines a representation $\varrho:\SL_2(k)\to\GL_4(K)$. The image of a diagonal matrix has the unipotent part centralizing $\Img\varrho$. Moreover, one can also see that these unipotent parts form an infinite group isomorphic to $k^\times/k_0^{\times}$, where $k_0\subset k$ is the subfield of squares. Hence, $Z(\overline{\Img\varrho})$ is infinite.
\end{remark}

\begin{remark}
In the statement of Lemma~\ref{lem:FiniteCenter}, if one replaces $\SL_2(k)$ by a group of $k$-rational points of an arbitrary split semisimple algebraic group $G$ over~$\ZZ$ and the requirement $\charac k\neq 2$ --- by $\charac k\nmid|W|$, where $W$ is the Weyl group of $G$, one still gets a correct statement, which can be proved using a slight modification of the given proof.
\end{remark}

\subsection{Proof of Theorem~\ref{thm:main}}\label{sec:proof}
Let us first reduce the problem to the case of universal~$k$. Let $k\subset\U$ be a $\Delta$-field extension, where $\U$ is universal. Changing the base to $\U$, we still have a central extension
$$
A_{\U}\hookrightarrow E_{\U}\stackrel{\pi_{\U}}{\twoheadrightarrow} G_{\U}
$$
with the same properties except $G_{\U}$ is now semisimple. Then $G_{\U}$ is an almost direct product of normal simple subgroups~\cite[Theorem 15, p.~227]{CassidyClassification}. These subgroups have the same differential type, since otherwise the strong identity component of $G_{\U}$ would be a proper normal subgroup defined over $k$, which is impossible due to simplicity of $G$. If $\pi_{\U}$ splits over each of these subgroups, it splits over $G_{\U}$.

Suppose now that $k$ is universal. By Theorem~\ref{thm:Cassidy}, there exist a simple split $H\in\LAG_{\Q}$ and an $S\subset k\Delta$ such that $G=H_{k}^S$. If $\widehat{H}\to H$ is a simply connected cover \cite[Section 31.1]{Humphreys}, the pull-back $\widehat{\pi}$ of the induced morphism $\widehat{H}^S\to H^S$ and $\pi$ can be included in the central extension
$$
A\hookrightarrow \widehat{E}\stackrel{\widehat\pi}{\twoheadrightarrow} \widehat{H}_k^S,
$$
which reduces the situation to the case of simply connected $H$, since if $\widehat\pi$ splits, so does $\pi$.

Suppose that $H$ is simply connected. We fix a maximal torus in $H$ and the corresponding root system $\Sigma$. To every $\alpha\in\Sigma$ there corresponds a 3-dimensional root subgroup $H_{\alpha}\subset H$ (isomorphic to $\SL_2$) and $H$ is generated by $H_{\alpha}$, $\alpha\in\Sigma$ \cite[Chapter X]{Humphreys}. We will need the following fact. 
\begin{proposition}\label{prop:Steinberg}
Let $\alpha\in \Sigma$ be a long root and let $T\subset H_\alpha$ be a (1-dimensional) maximal torus.
Let $K$ be an algebraically closed field. If $C\hookrightarrow B\stackrel{\mu}{\twoheadrightarrow} H(K)$ is a central extension of abstract groups such that $\mu^{-1}(T(K))$ is commutative, then $\mu$ splits.
\end{proposition}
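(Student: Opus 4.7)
The plan is to invoke the Steinberg--Matsumoto theory of central extensions of simply connected Chevalley groups, reducing the splitting of $\mu$ to a short cocycle computation that uses algebraic closedness of $K$. Since $H$ is simply connected and $K$ is algebraically closed (in particular infinite), $H(K)$ is a perfect abstract group, so it admits a universal central extension
\[
\Pi \hookrightarrow \widetilde H \stackrel{\pi}{\twoheadrightarrow} H(K).
\]
The given extension $\mu$ is then the pushout of $\pi$ along a unique homomorphism $\psi: \Pi \to C$, and $\mu$ splits iff $\psi = 0$, so it suffices to show that $\psi$ vanishes on a generating set of $\Pi$.

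By Steinberg's construction together with Matsumoto's theorem, for the long root $\alpha$ the group $\Pi$ is generated by the Steinberg symbols $c(a,b) := \tilde h_\alpha(a)\,\tilde h_\alpha(b)\,\tilde h_\alpha(ab)^{-1}$, where $\tilde h_\alpha(a) \in \widetilde H$ is the canonical lift of the torus element $h_\alpha(a) \in T(K)$. These symbols are bilinear in $(a,b) \in K^\times \times K^\times$ and satisfy the antisymmetry relation $c(a,b)\,c(b,a) = 1$. (Both the simply-connected and long-root hypotheses enter crucially here, since only then do symbols coming from one root exhaust the full universal kernel.)

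Let $\phi: \widetilde H \to B$ be the classifying map and set $\bar h_\alpha(a) := \phi(\tilde h_\alpha(a))$. Then each $\bar h_\alpha(a)$ lies in $\mu^{-1}(T(K))$, and $\psi(c(a,b)) = \bar h_\alpha(a)\,\bar h_\alpha(b)\,\bar h_\alpha(ab)^{-1} \in C$. By the commutativity hypothesis on $\mu^{-1}(T(K))$, $\bar h_\alpha(a)\,\bar h_\alpha(b) = \bar h_\alpha(b)\,\bar h_\alpha(a)$, so the value $\psi(c(a,b))$ is symmetric in $(a,b)$. Combined with antisymmetry this forces $\psi(c(a,b))^2 = 1$. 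Since $K$ is algebraically closed, every $a \in K^\times$ has a square root $a_0$, and bilinearity gives $\psi(c(a,b)) = \psi(c(a_0,b))^2$; applying the same $2$-torsion conclusion to $c(a_0,b)$, the square is trivial. Hence $\psi(c(a,b)) = 1$ for all $a,b$, so $\psi = 0$ and $\mu$ splits.

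The principal obstacle in this line of attack is Steinberg's and Matsumoto's theorem that the symbols $c(a,b)$ attached to a single long root already generate the entire universal kernel $\Pi$, together with the antisymmetry relation they satisfy; once this black box is accepted, the rest is an elementary exploitation of $K^\times$ being $2$-divisible and of the perfection of $H(K)$.
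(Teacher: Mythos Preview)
Your argument is correct and is essentially the paper's own proof: the paper phrases it as injectivity of the restriction $H^2(H(K),C)\to H^2(T(K),C)$ (Matsumoto, Th\'eor\`eme~5.10) together with vanishing of the restricted class via the symbol relations of Matsumoto's Proposition~5.7 and $2$-divisibility of $K^\times$, which is exactly your universal-central-extension computation unwound. One small caveat: in type $C_n$ the long-root symbol is a priori only a \emph{symplectic} Steinberg symbol, so bilinearity is not immediate from Matsumoto's theorem alone but follows once $(K^\times)^2=K^\times$---precisely the square-root hypothesis you already invoke.
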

\begin{proof}
The restriction homomorphism $\nu:H^2(H(K),A)\to H^2(T(K),A)$ is an embedding: see, e.~g., Theorem 5.10 of~\cite{Matsumoto} and the discussion above it. The fact that $\nu$ maps the class of $\mu$ to $1$ follows from algebraic closedness of $K$ (it suffices, in fact, that all elements of $K$ are squares) and~\cite[Proposition 5.7(b,d)]{Matsumoto}.
\end{proof}
\begin{remark}
Proposition~\ref{prop:Steinberg} also follows from~\cite[Lemma 39(c), p.~70, and Theorem~12, p.~86]{Steinberg}.
\end{remark}

We will aplly Proposition~\ref{prop:Steinberg} for $K:=k^S$, $C:=A(k)$, $B:=E(k)$, $H:=G(k)=H(K)$, $\mu:=\pi(k)$.  Propositions~\ref{prop:Comm} and~\ref{prop:StrConn} will show that the hypothesis of Proposition~\ref{prop:Steinberg} is satisfied, hence $\pi(k)$ splits.

\begin{proposition}\label{prop:Comm}
Let
$
C\hookrightarrow B\stackrel{\mu}{\twoheadrightarrow} T
$
be a central extension in $\LDAG_k$. Suppose that $T$ is commutative strongly connected and $\tau(C)<\tau(T)$. Then $B$ is commutative.
\end{proposition}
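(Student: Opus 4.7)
The plan is to extract from the commutator on $B$ a morphism $\phi: T\times T\to C$ of LDAGs and show that strong connectedness of $T$ forces $\phi$ to vanish identically, whence $B$ is commutative.

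First I would arrange the descent. Commutativity of $T=B/C$ forces $[B,B]\subset C$, and centrality of $C$ in $B$ makes $[b_1,b_2]$ depend only on $\mu(b_1),\mu(b_2)$: replacing $b_i$ by $b_ic_i$ with $c_i\in C$ leaves the commutator unchanged. Consequently the morphism of $\Delta$-$k$-algebraic sets $B\times B\to C$, $(b_1,b_2)\mapsto[b_1,b_2]$, is constant on fibers of $\mu\times\mu$. Invoking that $\mu$ is a quotient morphism in $\LDAG_k$, this descends to a morphism $\phi:T\times T\to C$ of LDAGs.

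Next I would check that for each fixed $t\in T$, the slice $\phi_t:=\phi(t,-): T\to C$ is a group homomorphism. This is the standard identity $[a,b_1b_2]=[a,b_1]\cdot b_1[a,b_2]b_1^{-1}$ combined with centrality of $[a,b_2]\in C$, which makes the conjugation act trivially; so $\phi_t\in\Hom_{\LDAG_k}(T,C)$.

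Then I would finish via strong connectedness. The image $\Img\phi_t$ is a closed subgroup of $C$ (morphisms of LDAGs have closed image), so $\tau(\Img\phi_t)\le\tau(C)<\tau(T)$. Since $T$ is commutative, $\Ker\phi_t$ is a closed normal subgroup, and the quotient $T/\Ker\phi_t\cong\Img\phi_t$ has $\Delta$-type strictly less than $\tau(T)$. By the definition of strong connectedness (i.e., the strong identity component of $T$ equals $T$), the minimal normal closed subgroup of $T$ with quotient of smaller $\Delta$-type is $T$ itself; hence $\Ker\phi_t=T$ and $\phi_t$ is trivial. As $t$ was arbitrary, $\phi\equiv e$, so $[B,B]=\{e\}$ and $B$ is commutative.

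The step I expect to require the most attention is the descent in the first paragraph: while it is transparent on points that the commutator is $C$-invariant in each argument, one must verify that $\mu\times\mu\colon B\times B\to T\times T$ genuinely descends morphisms in $\LDAG_k$, i.e.\ that the categorical quotient of $B$ by $C$ behaves as in algebraic geometry. Everything else is an immediate combination of the hypothesis $\tau(C)<\tau(T)$ with strong connectedness of $T$.
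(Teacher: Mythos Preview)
Your argument is correct and follows a genuinely different path from the paper's. The paper works with the strong identity component $B_0$ of $B$: it first shows $\mu(B_0)=T$ (since $\tau(T/\mu(B_0))\le\tau(B/B_0)<\tau(B)=\tau(T)$ and $T$ is strongly connected), then invokes \cite[Corollary~2.22]{JH} to conclude that $B_0$ is commutative because $T$ is, and finishes with $B=CB_0$. Your route is more self-contained in that it avoids the external citation: you manufacture, for each fixed point of $T$, a homomorphism $T\to C$ of LDAGs and kill it directly via the inequality $\tau(C)<\tau(T)$ together with strong connectedness of $T$. What the paper's approach buys is brevity (one line once the cited corollary is granted); what yours buys is that nothing beyond the definitions in Section~\ref{sec:DiffType}--\ref{sec:ASLDAGs} is used.

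On your flagged descent step: you can bypass the full bivariate descent entirely. It suffices that, for each fixed $b\in B(\U)$, the map $\psi_b\colon B\to C$, $b'\mapsto[b,b']$, is a morphism of LDAGs; and indeed it is differential polynomial (a slice of the commutator morphism $B\times B\to C$) and, by the identity you wrote, a group homomorphism since $[b,b']\in C$ is central. As $C\subset\Ker\psi_b$, the universal property of the quotient $T=B/C$ in $\LDAG$ immediately produces the induced morphism $T\to C$ --- no need to realize $\phi\colon T\times T\to C$ globally as a morphism of $\Delta$-algebraic sets.
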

\begin{proof}
Denote $B_0$ the strong identity component of $B$. Since $\mu$ induces an epimorphism $B/B_0\to T/\mu(B_0)$, 
$$
\tau(T/\mu(B_0))\leq \tau(B/B_0)<\tau(B)=\tau(T),
$$
which implies $\pi(B_0)=T$ by the strong connectedness of $T$. By $\tau(C)<\tau(T)$ and~\cite[Corollary 2.22]{JH}, $B_0$ and $T$ are either both commutative or both not. Hence, $B_0$ is commutative. Therefore, so is $B=CB_0$.
\end{proof}

Denote $\Gm$ and $\Ga$ the LAGs that take the multiplicative and the additive group of a ring, respectively.
\begin{proposition}\label{prop:StrConn}
The LDAG $T:=(\Gm)^S$ is strongly connected.
\end{proposition}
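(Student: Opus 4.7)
To show that $T$ is strongly connected, it suffices to verify that for every proper closed subgroup $H\subsetneq T$ we have $\tau(T/H)=\tau(T)$; together with the exactness property of $\tau$ from Section~\ref{sec:DiffType}, this forces $T_0=T$.

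First I would check that $T$ is irreducible and compute $\tau(T)$. The coordinate Hopf algebra $k\{T\}=k\{y,y^{-1}\}/[Sy]_\Delta$ has defining ideal generated by the $k$-linear forms $\partial^\alpha(s(y))$ (for $s\in S$, $\alpha\in\N^m$) in the coordinate functions $\{y_{\vec d}\}_{\vec d\in\N^m}$, so after a $k$-linear change of variables the quotient becomes a polynomial ring over $k$ localized at $y$, which is an integral domain. A direct Kolchin dimension-polynomial count then gives $\tau(T)=m-\dim_k\Span_k(S)$.

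Next I would introduce the logarithmic-derivative morphism $\lambda:T\to\Ga^m$, $y\mapsto(\partial_1 y/y,\ldots,\partial_m y/y)$, which is a homomorphism in $\LDAG_k$ by the identity $\partial(uv)/(uv)=\partial u/u+\partial v/v$. Its kernel is the $\Delta$-constants $(\Gm)^\Delta\subset T$ (of $\tau=0$) and its image $W$ is an additive closed subgroup of $\Ga^m$ cut out by the $S$-linear equations and the integrability relations $\partial_i b_j=\partial_j b_i$. Exactness of $\tau$ applied to $(\Gm)^\Delta\hookrightarrow T\twoheadrightarrow W$ yields $\tau(W)=\tau(T)$. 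For any proper closed subgroup $H\subsetneq T$ I would show $\lambda(H)\subsetneq W$: closed subgroups of the multiplicative LDAG $T$ are cut out (modulo torsion) by additional $k$-linear $\Delta$-equations on $y'/y$, and under $\lambda$ these become strict linear constraints on $W$. Then the induced surjection $T/H\twoheadrightarrow W/\lambda(H)$, combined with the strong connectedness of the additive LDAG $W$ (established by a direct Kolchin-polynomial argument: any proper closed subgroup of $W$ comes from an additional linear $\Delta$-equation, which does not change the leading term of $\omega_W$), gives $\tau(T/H)\ge\tau(W/\lambda(H))=\tau(W)=\tau(T)$.

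The main obstacle is the classification step: verifying that every proper closed $H\subsetneq T$ maps to a proper closed subgroup of $W$, or equivalently that the short exact sequence $(\Gm)^\Delta\hookrightarrow T\twoheadrightarrow W$ admits no splitting in $\LDAG_k$. A splitting would amount to an antiderivative-like morphism $W\to T$ producing a multiplicative element from additive data, which is not expressible by differential polynomials; once this is pinned down, the preceding chain of inequalities yields the strong connectedness of $T$.
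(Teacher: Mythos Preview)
Your outline has two genuine gaps, and the paper's own argument avoids both by a cleaner route.

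First, the step ``for every proper closed $H\subsetneq T$ one has $\lambda(H)\subsetneq W$'' is exactly the heart of the matter, and you do not prove it. Your proposed reduction to non-splitting of $(\Gm)^\Delta\hookrightarrow T\twoheadrightarrow W$ is essentially correct (a connected proper $H$ with $\lambda(H)=W$ would meet the kernel trivially and give a section), but the justification ``an antiderivative-like morphism is not expressible by differential polynomials'' is not an argument: you would need to rule out \emph{every} closed subgroup of $T$ mapping onto $W$, which amounts to a classification of closed subgroups of $(\Gm)^S$ that you have not supplied. Second, your claim that $W$ is strongly connected because ``an additional linear $\Delta$-equation does not change the leading term of $\omega_W$'' is at best imprecise: passing to a proper closed subgroup of an additive group can certainly lower the degree of the Kolchin polynomial (e.g.\ $\Ga^{\partial}\subset\Ga$), and what you actually need is a statement about $\omega_{W/H}$, which you do not establish.

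The paper sidesteps both issues. Instead of the logarithmic derivative, it passes to tangent spaces at the identity: an epimorphism $\varphi:T\to T'$ with $\tau(T')<\tau(T)$ induces a surjection $d_e\varphi:\Ga^S\simeq T_e T\to T_e T'$, and since $\tau(L)=\tau(T_eL)$ it suffices to show $\Ga^S$ is strongly connected. For this the paper gives a one-line argument you are missing: the strong identity component $D\subset\Ga^S$ is stable under all endomorphisms, and for any $a\in\Ga^S(k)=k^S$ and any nonzero $b\in D(k)$ the endomorphism $x\mapsto(a/b)x$ (well defined because $a/b\in k^S$) sends $b$ to $a$, so $D=\Ga^S$. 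This endomorphism trick is what your Kolchin-polynomial sketch should be replaced by, and it makes the subgroup-classification problem disappear entirely.
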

\begin{proof}
It is straightforward to verify that $T$ is connected. Now, let $\varphi: T\to T'$ be an epimorphism in $\LDAG_k$ with $\tau(T')<\tau(T)$. We need to show that $T'$ is trivial. In the rest, we will use properties of tangent spaces of LDAGs --- see \cite[Chapter III]{Cassidy} for details. The epimorphism $\varphi$ induces the epimorphism of tangent spaces $$d_e\varphi:\Ga^S\simeq T_e(T)\to T_e(T').$$ It suffices to show that $d_e\varphi$ is trivial. Since $\tau(L)=\tau(T_eL)$ for all LDAGs $L$, the $\Delta$-type of $T_e(T')$ is less than that of $\Ga^S$. 

Therefore, $d_e\varphi$ would be trivial if $(\Ga)^S$ equals its strong identity component~$D$. Recall that all endomorphisms of $(\Ga)^S$ preserve $D$. Let $a\in\Ga^S(k)=\Ga(k^S)\subset k$ and show that $a\in D(k)$. Since $D$ is nontrivial, there exists a nonzero $b\in\Ga(k^S)$. Then the endomorphism $x\mapsto \frac{a}{b}x$ of $\Ga^S(k)$ sends $b$ to $a$. We conclude $a\in D(k)$, which finishes the proof. 
\end{proof}

We obtain that $\pi(k):E(k)\to H^S(k)=H(K)$ splits as a morphism of abstract groups. Denote $\sigma: H(K)\to E(k)$ the corresponding splitting. It remains to show that $\sigma$ is a morphism of LDAGs.

Since $H(K)$, as a LAG over $K$, is generated by its 3-dimensional root subgroups, \cite[Proposition 7.5]{Humphreys} implies existence of 3-dimensional root subgroups $H_i\subset H$, $1\leq i\leq n$, such that
$$
H(K)=H_1(K)H_2(K)\cdots H_n(K).
$$ 
Let $\widetilde{H_i}$ denote the Kolchin closure of $\sigma(H_i(K))$ in $E(k)$, $1\leq i\leq n$. We have $\widetilde{H_i}\subset\sigma(H_i(K))A$. Since Zariski closed sets are Kolchin closed, Lemma~\ref{lem:FiniteCenter} implies that $\widetilde{H_i}\cap A$ is finite, hence, trivial, for $A$ is unipotent. Therefore, $\widetilde{H_i}=\sigma(H_i(K))$ and
$$
\widetilde{H_1}\widetilde{H_2}\cdots \widetilde{H_n}=\sigma(H(K)).
$$ 
On the other hand, as an image of a morphism, this product contains an open subset of the Kolchin closure of $\sigma(H(K))$ in $E(k)$. Similar to the case of algebraic groups~\cite[Lemma 7.4]{Humphreys}, this implies that $\sigma(H(K))$ is a Kolchin closed subgroup of $E(k)$. Then the restriction of $\pi(k)$ to $\sigma(H(K))$ is a bijective homomorphism of LDAGs. Then, by \cite[Proposition 8, p.909]{Cassidy}, $\sigma$ is a morphism of LDAGs, which completes the proof.

\begin{remark}\label{rem:Freitag}
James Freitag has let us know about the paper~\cite{Freitag}, where he shows, using both model theory and differential algebra, that the group of $\U$-rational points of a non-commutative almost simple LDAG is perfect as an abstract group. Having such a result, one would no longer need Lemma~\ref{lem:FiniteCenter} to prove Theorem~\ref{thm:main} for the base field $k=\U$. Indeed, then $\sigma:H(K)\to E(k)$ from the proof is automatically surjective, hence an isomorphism of LDAGs.  
\end{remark}

\begin{remark}\label{rem:Pillay}
One could consider a similar problem for differential algebraic groups (DAGs) --- see~\cite{JH} and references there for the definitions. (In this general situation, all objects are  defined as sets of points in some $\U^n$.) If $|\Delta|=1$, one can show that all non-commutative almost simple DAGs are linear, hence, simple by Theorem~\ref{thm:main}. Namely, let $G$ be such a group. By~\cite[Corollary 4.8]{PillayQuestions}, there exists a normal linear $\Delta$-subgroup $N\subset G$ such that the quotient $G/N$ is a Kolchin closed subgroup of an Abelian variety. Since $G$ is almost simple, $N\subset Z(G)$. It follows that $A:=G/Z(G)$ is a Kolchin closed subgroup of some Abelian variety. Moreover, $A$ is connected and is simple as an abstract group. By~\cite[Lemma 4.2]{Pillay2}, $A$ is either trivial or has torsion. Hence, $A$ is trivial. 

As was kindly communicated to the author by Anand Pillay, the statements which we have referred to in this argument extend to the case of an arbitrary~$\Delta$. Hence, all non-commutative almost simple DAGs are simple. 
\end{remark}

\section{A construction of central extensions of simple LDAGs}\label{sec:Construction}
This section is devoted to illustrate that Theorem~\ref{thm:main} does not generalize if one drops the condition $\tau(A)<\tau(G)$ (Corollary~\ref{cor:Chev}). The construction we describe below is also supposed to be used in a subsequent publication, where we intend to describe universal central extensions of simple objects in the category $\LDAG_k$.

Recall that all considered fields are of characteristic~0, although the results of this section extend to the case of positive characteristic with mild restrictions.
\subsection{An abstract construction}\label{sec:absconstr}
Let $G$ be an abstract group, $K$ be a field, and $V$ be a $KG$-module of dimension $n$ over $K$. Then $G$ acts on $\End(V)$ by conjugation. Moreover, we have the $G$-equivariant linear map
$$
\End V\otimes_K\End V\to K,\quad A\otimes B\mapsto \tr(AB).
$$
In composition with the cup-product
$$
Z^1(G,\End(V))\otimes_KZ^1(G,\End(V))\to Z^2(G,\End(V)\otimes_K\End(V)),
$$
this gives us the map
$$
\alpha: Z^1(G,\End(V))\otimes_KZ^1(G,\End(V))\to Z^2(G,K).
$$

Recall that the isomorphism classes of central extensions of $G$ by $K$ correspond to elements of $H^2(G, K)$. For a cocycle $c$, we denote $\bar{c}$ the corresponding class in cohomology, and we write $\bar{\alpha}(c_1\otimes c_2)$ instead of $\overline{\alpha(c_1\otimes c_2)}$. We want to pick two 1-cocycles $c_1,c_2:G\to\End(V)$ such that $\bar{\alpha}(c_1\otimes c_2)\neq 0$. Note the formula
\begin{equation}\label{eq:alpha}
\alpha(c_1\otimes c_2)(g,h)=\tr(c_1(g)gc_2(h)g^{-1})=-\tr(c_1(g^{-1})c_2(h)).
\end{equation}

Every derivation $\partial: K\to K$ determines an element in $H^1(G,\End(V))$ as follows. Choose a $K$-basis in $V$ thus identifying $V$ with $K^n$. The action of $G$ on $V$ yields the homomorphism $G\to\GL_n(K)$. One can verify that the map
$$
c_\partial: \GL_n(K)\to \End(K^n),\qquad A\mapsto\partial(A)A^{-1}
$$
determines an element of $Z^1(G,\End(K^n))$, where $\partial(A)\in\End(K^n)$ is obtained by applying $\partial$ to $A$ entry-wise. A different choice of the basis would give the same element up to a shift by a 1-coboundary.

\begin{proposition}\label{prop:CupProduct}
Let $G$, $K$ and $V$ be as above. Suppose that
\begin{enumerate}
\item[(1)] there given two $K$-linearly independent derivations $\partial_1,\partial_2: K\to K$ and
\item[(2)] there is a homomorphism $\nu:K^\times\to G$, a basis $\{e_1,\ldots,e_n\}$ of $V$, and integers $d_1,\ldots, d_n$,  such that, $\nu(t)(e_i)=t^{d_i}e_i$ for all $t\in K^\times$;
\end{enumerate}
Then $H^2(G,K)\ni\bar{\alpha}(c_{\partial_1}\otimes c_{\partial_2})\neq 0$.
\end{proposition}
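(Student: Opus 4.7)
The plan is to pull the cohomology class back along $\nu$ to $H^2(K^\times, K)$, where everything becomes fully explicit, and to show that even the pullback is non-zero — which will imply that the original class $\bar\alpha(c_{\partial_1} \otimes c_{\partial_2})$ is non-zero in $H^2(G, K)$. The $G$-equivariance of the trace pairing $\End V \otimes \End V \to K$ and the naturality of the cup product give
$$
\nu^* \bar\alpha(c_{\partial_1} \otimes c_{\partial_2}) \;=\; \bar\alpha\bigl(\nu^* c_{\partial_1} \otimes \nu^* c_{\partial_2}\bigr) \;\in\; H^2(K^\times, K),
$$
so it suffices to show this pullback is nontrivial. I will use, as an implicit non-degeneracy assumption, that some $d_i \neq 0$; otherwise $\nu$ acts trivially on $V$ and the restriction is zero by construction.

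In the chosen basis, $\nu(t) = \diag(t^{d_1}, \ldots, t^{d_n})$. Writing $D := \diag(d_1, \ldots, d_n)$, a direct computation yields
$$
\nu^* c_{\partial_i}(t) \;=\; \partial_i(\nu(t))\,\nu(t)^{-1} \;=\; \frac{\partial_i(t)}{t}\,D.
$$
Since $D$ commutes with $\nu(s)$, the inner conjugation $\nu(s)\, c_{\partial_2}(\nu(t))\,\nu(s)^{-1}$ in~\eqref{eq:alpha} leaves $c_{\partial_2}(\nu(t))$ unchanged, and the formula collapses to
$$
f(s,t) \;:=\; \nu^*\alpha(c_{\partial_1} \otimes c_{\partial_2})(s,t) \;=\; \tr\!\left(\frac{\partial_1(s)}{s}\,D \cdot \frac{\partial_2(t)}{t}\,D\right) \;=\; \left(\sum_{i=1}^n d_i^2\right)\frac{\partial_1(s)\,\partial_2(t)}{st},
$$
with coefficient $\sum d_i^2 \neq 0$ in characteristic zero.

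To see that $\bar f \neq 0$, observe that under the trivial $K^\times$-action on $K$, every $1$-coboundary $d^1\varphi(s,t) = \varphi(s) + \varphi(t) - \varphi(st)$ is symmetric in $(s,t)$; hence it suffices to find $s,t \in K^\times$ with $f(s,t) \neq f(t,s)$. The antisymmetrization is
$$
f(s,t) - f(t,s) \;=\; \left(\sum d_i^2\right)\frac{\partial_1(s)\,\partial_2(t) - \partial_1(t)\,\partial_2(s)}{st},
$$
and if the numerator vanished for all $s, t \in K^\times$, then fixing any $s_0 \in K^\times$ with $\partial_1(s_0) \neq 0$ (such $s_0$ exists as $\partial_1 \neq 0$) the pair $(a,b) := \bigl(\partial_2(s_0), -\partial_1(s_0)\bigr) \neq (0,0)$ would yield a relation $a\,\partial_1 + b\,\partial_2 = 0$ on all of $K$, contradicting hypothesis~(1). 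The principal conceptual step is simply the restriction to the one-parameter torus $\nu(K^\times)$; once that is set up, the remaining work — the antisymmetrization bookkeeping and the Wronskian-style linear-independence argument — is routine, and I would expect the only place requiring care is the verification that $\nu^*$ commutes with $\alpha$, which follows from functoriality of the cup product and equivariance of the trace pairing.
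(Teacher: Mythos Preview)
Your proof is correct and takes essentially the same route as the paper: both restrict (equivalently, pull back along $\nu$) to the one-parameter subgroup, compute the cocycle there explicitly as $\bigl(\sum_i d_i^2\bigr)\,\partial_1(s)\partial_2(t)/(st)$, and detect non-triviality via its asymmetry, using that coboundaries on an abelian group with trivial coefficients are symmetric while linear independence of $\partial_1,\partial_2$ yields $s,t$ with $\partial_1(s)\partial_2(t)\neq\partial_2(s)\partial_1(t)$. Your explicit flag that some $d_i\neq 0$ is needed (so that $\sum d_i^2\neq 0$) is a hypothesis the paper also uses tacitly.
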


\begin{proof}
If the central extension of $G$ corresponding to the cocycle $c:=\alpha(c_{\partial_1}\otimes c_{\partial_2}):G\times G\to K$ splits, then it also splits over $H:=\nu(K^\times)$. Since $H$ is commutative, this implies $c(x,y)=c(y,x)$ for all $x,y\in H$. We will show that $c(\nu(s),\nu(t))\neq c(\nu(t),\nu(s))$ for some $s,t\in K^\times$, thus completing the proof.

Since $\partial_1$ and $\partial_2$ are linearly independent, there are $s,t\in K$ such that $\partial_1(s)\partial_2(t)-\partial_2(s)\partial_1(t)\neq 0$. In particular, $s,t\in K^\times$. 
By~\eqref{eq:alpha}, computing the trace with respect to the basis $\{e_i\}_{1\leq i\leq n}$, we obtain
$$
c(\nu(s),\nu(t))=\tr(c_{\partial_1}(\nu(s))c_{\partial_2}(\nu(t)))=\sum_{i=1}^n \partial_1(s^{d_i})s^{-d_i}\partial_2(t^{d_i})t^{-d_i}=(\sum_{i=1}^nd_i^2)\partial_1(s)\partial_2(t)(st)^{-1}.
$$  
It follows $c(\nu(s),\nu(t))\neq c(\nu(t),\nu(s))$.
\end{proof}

\subsection{Application to LDAGs}
\begin{corollary}\label{cor:Chev}
Let $H$ be a simple LAG over a universal $\Delta$-field $\U$ with $|\Delta|\geq 2$. There exists a non-splitting central extension
\begin{equation}\label{eq:ga}
\Ga\hookrightarrow E\stackrel{\pi}{\twoheadrightarrow} H
\end{equation}
in $\LDAG_k$ such that $E$ is perfect.   
\end{corollary}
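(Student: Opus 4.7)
The plan is to invoke the abstract machinery of Section~\ref{sec:absconstr} with a carefully chosen representation and cocharacter of $H$, check that the resulting cocycle is a morphism in $\LDAG_\U$, and finally verify perfectness of the extension by a direct commutator computation on a maximal torus.

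Concretely, take $K=\U$, $G=H(\U)$, and let $V\subset\U^n$ be a faithful $H$-representation (e.g.\ coming from an embedding $H\subset\GL_n$). Fix a maximal torus $T\subset H$ and a nontrivial cocharacter $\nu\colon\Gm\to T$; since $V$ is faithful, the weights $d_1,\ldots,d_n$ of $\nu$ on $V$ are not all zero, so $\sum d_i^2>0$. Using $|\Delta|\geq 2$ and the universality of $\U$, pick two $\U$-linearly independent derivations $\partial_1,\partial_2\in\Delta$. Proposition~\ref{prop:CupProduct} then produces a nonzero class in $H^2(H(\U),\U)$ represented by
$$
c(g,h)=\tr\bigl(\partial_1(g)\,\partial_2(h)\,h^{-1}g^{-1}\bigr).
$$
Since matrix multiplication, inversion on $H$, entry-wise application of $\partial_i$, and trace are all differential-regular operations, $c$ is a morphism $H\times H\to\Ga$ in $\LDAG_\U$, so $E_c:=H\times\Ga$ with twisted multiplication is a central extension in $\LDAG_\U$. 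Non-splitting in $\LDAG_\U$ is immediate from Proposition~\ref{prop:CupProduct}, since any LDAG section of $\pi$ is in particular an abstract section.

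For perfectness, $H$ is perfect in characteristic~$0$, so $\pi([E_c,E_c])=[H,H]=H$, giving $E_c=[E_c,E_c]\cdot\Ga$; it remains to show $\Ga\subset[E_c,E_c]$. For commuting $u,v\in H(\U)$, the products $(u,0)(v,0)=(uv,c(u,v))$ and $(v,0)(u,0)=(uv,c(v,u))$ have the same first coordinate, and a short calculation yields $[(u,0),(v,0)]=(e,\,c(u,v)-c(v,u))\in\Ga$. Specializing to $u=\nu(s)$, $v=\nu(t)$ with $s,t\in\U^\times$, the formula derived inside the proof of Proposition~\ref{prop:CupProduct} shows this commutator equals
$$
\Bigl(\sum d_i^2\Bigr)\,\frac{\partial_1(s)\partial_2(t)-\partial_1(t)\partial_2(s)}{st}.
$$
Given arbitrary $z\in\U$, we first solve the compatible system $\partial_1(s)=s$, $\partial_2(s)=0$ in $\U$ (compatible because $[\partial_1,\partial_2]=0$ and solvable because $\U$ is differentially closed), and then solve $\partial_2(t)=zt$; with these choices the commutator $\Ga$-component equals $(\sum d_i^2)z$. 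Varying $z$ over $\U$ gives $[E_c,E_c]\cap\Ga=\Ga$, hence $E_c$ is perfect.

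The main obstacle is this perfectness step: one must show that the image in $\Ga$ of the commutator map, restricted to commuting pairs in $H(\U)$, is all of $\U$. The argument relies on the $\U$-linear independence of the two chosen derivations (which forces the antisymmetrization of $c$ on the cocharacter to be nonzero, exactly as in Proposition~\ref{prop:CupProduct}) together with the differential closedness of $\U$ for solving the auxiliary first-order linear equations. The remaining ingredients---cocycle algebra, reduction of LDAG splittings to abstract splittings, and the differential-regularity of $c$---are routine.
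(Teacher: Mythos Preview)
Your argument is correct and follows essentially the same route as the paper: apply Proposition~\ref{prop:CupProduct} with two distinct derivations to obtain a non-split abstract central extension, observe that the cocycle $c$ is differential polynomial so that $E_c$ lives in $\LDAG_\U$, and then verify perfectness by computing commutators of elements lying over a one-dimensional torus. The paper establishes surjectivity of the commutator map onto $\Ga(\U)$ with the single phrase ``since $\U$ is differentially closed, the commutator above can take on any value in~$\U$''; your explicit choice of $s,t$ solving $\partial_1 s=s$, $\partial_2 s=0$, $\partial_2 t=zt$ is a legitimate way to unpack that line.
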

\begin{proof}
Let $K:=\U$, $G:=H(\U)$, $V$ a nontrivial algebraic $G$-module and $\partial_1\neq\partial_2\in\Delta$. Then, by Proposition~\ref{prop:CupProduct} and basic properties of representations of simple LAGs (see, e.~g., \cite[Lemma 19, p.~27]{Steinberg}), the central extension of $G$ by $\U$ corresponding to the 2-cocycle $\alpha(c_{\partial_1}\otimes c_{\partial_2})$ is non-trivial. It is given by introducing the following group structure on the set $E':=G\times \U$:
$$
(g,x)(h,y):=(gh,x+y+\alpha(c_1\otimes c_2)(g,h))
$$
and definig $\pi':E'\to G$ to be the projection. Note that $E'=[E',E']$ (as an abstract group). Indeed, it suffices to show that $\U\subset [E',E']$ since $E'=\U\cdot [E',E']$. Let $\U^\times\subset G$ be a 1-dimensional algebraic torus. It follows from the proof of Proposition~\ref{prop:CupProduct} that, for all $s,t\in \U^\times$, the commutator of $s$ and $t$ in $E'$ equals
$$
[(s,0),(t,0)]=\left(1,c\frac{\partial_1(s)\partial_2(t)-\partial_1(t)\partial_2(s)}{st}\right),
$$
where $c\neq 0$ is a constant that does not depend on $s$ and $t$. Since $\U$ is differentially closed, the commutator above can take on any value in $\U$. Hence, $E'=[E',E']$. 

It remains to show that the central extension
\begin{equation}\label{eq:ga'}
\U\hookrightarrow E'\stackrel{\pi'}{\twoheadrightarrow} G
\end{equation}
we have constructed is obtained by taking $\U$-rational points of some extension~\eqref{eq:ga} in $\LDAG_{\U}$. This follows from the fact that all maps in~\eqref{eq:ga'} and the product map on $E'$ are differential polynomial (so, $E'$ is an LDAG). 
\end{proof}

\subsection{Linearity of the central extensions}\label{sec:linearity}
Let $K$, $G$ and $V$ be as in Section~\ref{sec:absconstr}. We will give another construction of a central extension $K\hookrightarrow E\twoheadrightarrow G$ depending on cocycles $c_1,c_2\in Z^1(G,K)$, and we will see that it is the one determined by $\alpha(c_1\otimes c_2)$. Moreover, it will be clear why $E$ is linear if $G$ is. 

To every $c\in H^1(G,\End(V))$ there correspond two $G$-module structures  $l_c,l'_c:G\to\GL(U)$ on the space $U:=\End(V)\oplus K$ given by 
\begin{align}
l_c(g)(A,a)&:=(gAg^{-1}+c(g)a,a),\\
l'_c(g)(B,b)&:=(gBg^{-1},b+\tr(c(g^{-1})B).
\end{align}
These are dual with respect to the pairing
$$
U\otimes_KU\to K,\qquad (A,a)\otimes(B,b)\mapsto \tr(AB)+ab.
$$
Let us consider the vector space $W:=K\oplus\End(V)\oplus K$ and the subgroup $P\subset\GL(W)$ of all transformations preserving the flag $K\subset K\oplus\End(V)\subset W$. Note the maps
\begin{align}
\pi:W\to U,\quad (b,A,a)\mapsto (A,a),\\
\iota:U\to W,\quad (B,b)\mapsto (b,B,0),
\end{align}
which induce homomorphisms $\pi_*, i^*:P\to\GL(U)$. 

Let $c_1,c_2\in Z^1(G,\End(V))$. Set
\begin{equation}\label{eq:linear}
E=E(c_1,c_2):=\{(g,p)\in G\times P\ :\ l_{c_1}(g)=\pi_*(p),\ l'_{c_2}(g)=\iota^*(p)\}.
\end{equation}
The projection $G\times P\to G$ induces the homomorphism
$$
\nu:E\to G.
$$
We have $\Ker\nu$ consisting of the elements $(1,p_t)$, where $t\in K$ and $p_t\in P$ is defined by 
$$
p_t(b,A,a)=(b+at,A,a),\qquad (b,A,a)\in W
$$
On the other hand, note the map of sets $$\sigma: G\to E,\qquad g\mapsto (g,p_g),\quad p_g(b,A,a):=\left(b+\tr(c_2(g^{-1})A),\; gAg^{-1}+c_{1}(g)a,\; a\right),$$
which is a section (on the set level) of $\nu$. Hence, $\nu$ is an epimorphism. Since $p_tp_g=p_gp_t$ for all $t\in K$, $g\in G$, $\Ker\nu\subset Z(E)$. Computation shows that, for all $g,h\in G$,
$$
p_{h}p_g=p_{hg}p_{t},\quad t=\alpha(c_1\otimes c_2)(h,g)\in K.
$$
Hence, the corresponding to $\nu$ class in $H^2(G,K)$ is represented by the cocycle $\alpha(c_1\otimes c_2)$. Note that, by definition of $E$, if $G\subset\GL(V')$, then $E\subset G\times P\subset\GL(V')\times\GL(W)\subset\GL(V'\oplus W)$.

\section*{Acknowledgements} 
I am grateful to Alexandru Buium, Phyllis Cassidy, James Freitag, James Humphreys, Alexey Ovchinnikov, Anand Pillay, Gopal Prasad and Michael Singer for very helpful discussions.

\bibliographystyle{spmpsci}

\bibliography{SimpleExt}

\end{document}